\theoremstyle{plain}
\newtheorem{teo}{Theorem}[section]
\newtheorem{cor}[teo]{Corollary}
\newtheorem{lem}[teo]{Lemma}
\newtheorem{conj}[teo]{Conjecture}
\newtheorem{prop}[teo]{Proposition}
\theoremstyle{definition}
\newtheorem{dfn}[teo]{Definition}
\newtheorem{oss}[teo]{Remark}
\DeclareMathOperator{\rk}{rk}
\title{On matroid modularity and the coefficients of the inverse Kazhdan-Lusztig polynomial of a matroid }
\author{Lorenzo Vecchi}
\date{\today}
\begin{document}

\maketitle
\begin{abstract}
    Following the work of Gao and Xie in \cite{invkl}, we state some properties of the inverse Kazhdan-Lusztig polynomial of a matroid. We also give partial answers to a conjecture that states that regular connected matroids are non-degenerate. We link the degeneracy of a matroid to the inverse Kazhdan-Lusztig polynomial and we show that the Conjecture holds for modular matroids, by proving that degenerate modular matroids are not regular.
\end{abstract}
\section{Introduction}
Kazhdan-Lusztig polynomials for matroids were firstly introduced in 2016 by Elias, Proudfoot and Wakefield in \cite{kl}, in analogy with the classical Kazhdan-Lusztig polynomials for Coxeter groups (\cite{classickl}). While the classical version gives rise to a very broad class of polynomials (every polynomial with non-negative integer coefficients and constant term equal to 1 is the Kazhdan-Lusztig polynomial of some Coxeter group \cite{pol}), the matroidal Kazhdan-Lusztig polynomials have a very rigid structure: Braden et al. proved in \cite{singhodgetheory} that their coefficients are always non-negative, and it is conjectured that they form a log-concave sequence with no internal zeros. In 2020, Gao and Xie introduced in \cite{invkl} the analogue of the inverse Kazhdan-Lusztig polynomial for a matroid, employing the general framework of Kazhdan-Lusztig-Stanley theory. These polynomials were also proven to have non-negative coefficients in \cite{singhodgetheory}, and they are conjectured as well to be log-concave with no internal zeros.
The properties of these polynomials, which will be called $P_M(t)$ and $Q_M(t)$ in accordance to all the present literature, are usually not understandable at first glance, since their definitions are given through a quite intricate recursion.
Another Conjecture, found in \cite{survey}, gives a statement about the degree of these polynomials. We say that a matroid $M$ of rank $r$ is degenerate if the degree of $P_M(t)$ is strictly less than $\left\lfloor \frac{r-1}{2} \right\rfloor$. Then one has the following
\begin{conj}\label{nondegconj}
Every connected regular matroid is non-degenerate.
\end{conj}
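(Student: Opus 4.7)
The plan is to attack Conjecture \ref{nondegconj} via Seymour's decomposition theorem: every connected regular matroid is either graphic, cographic, isomorphic to $R_{10}$, or obtainable as a $2$-sum or $3$-sum of two smaller connected regular matroids. Proceeding by induction on the ground-set cardinality, it then suffices to (a) settle the three base families and (b) show that the $k$-sum ($k \in \{2,3\}$) of two non-degenerate connected regular matroids is again non-degenerate.

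For the base cases, I would first verify non-degeneracy of $R_{10}$ by a direct computation of $P_{R_{10}}(t)$ from the recursive definition, which is feasible since $R_{10}$ has only $10$ elements and rank $5$, so one needs a single coefficient to be nonzero. For connected graphic matroids $M(G)$, I would exploit the explicit computations available for distinguished families such as complete graphs, cycles, and wheels, and try to isolate a positive combinatorial quantity -- for instance a count of chains of flats in the bond lattice of $G$ attaining the maximal rank -- that provides a lower bound on the top-degree coefficient. For connected cographic matroids $M^*(G)$, I would attempt an analogous analysis, or else pass through the duality between $M(G)$ and $M^*(G)$ by extracting a relation between $P_M$ and $P_{M^*}$ from the Kazhdan-Lusztig-Stanley framework.

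For the inductive step, I would study how $P_M(t)$ transforms under parallel connection, series connection (which together with direct sums generate $2$-sums), and the $3$-sum operation, aiming to prove that the top-degree coefficient of $P_{M_1 \oplus_k M_2}(t)$ is a nonnegative combination of top-degree data coming from the factors in which at least one term survives. Concretely, I would combine the rank identity $r(M_1 \oplus_k M_2) = r(M_1) + r(M_2) - k + 1$, the defining recursion of $P_M$ applied to a carefully chosen element near the shared part, and the Hodge-theoretic interpretation of the coefficients from \cite{singhodgetheory}: for realizable (hence regular) matroids the coefficients of $P_M$ are dimensions of intersection cohomology groups to which hard Lefschetz applies, and this rigidity is the key ingredient to forbid cancellation in the top degree.

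The main obstacle, and the reason the conjecture is still open, is the absence of a clean product-type formula for $P_M$ under $2$- and $3$-sums. For direct sums $P_{M_1 \oplus M_2}(t) = P_{M_1}(t)\,P_{M_2}(t)$ would close the induction immediately, but for $k \ge 2$ the $k$-sum mixes contractions and restrictions at the shared elements in a way that could \emph{a priori} cancel the leading coefficient; note moreover that connectivity itself is sensitive to these operations, so one must verify that the induction hypothesis is applicable to the correct connected pieces. The hardest step will therefore be proving a non-cancellation statement for $k$-sums; I expect the right tool to be the intersection-theoretic realization of the Kazhdan-Lusztig coefficients specific to regular matroids, combined with a careful description of how a Seymour $k$-sum decomposition is reflected on a totally unimodular realization over $\mathbb{Z}$.
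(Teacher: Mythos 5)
This statement is presented in the paper as a \emph{conjecture}, and the paper does not prove it in general; it only establishes partial cases (most notably Theorem \ref{modregular}, which settles the modular case by showing that a connected modular matroid of rank at least $3$ is never regular, plus a conditional reduction of the even-rank case to the odd-rank case). Your proposal via Seymour's decomposition is a reasonable research program, but it is not a proof: each of its load-bearing steps is left open. Concretely, (1) the non-degeneracy of \emph{all} connected graphic matroids cannot be "verified" from the known computations for complete graphs, cycles and wheels; those are isolated families, and you give no mechanism that extends a positive lower bound on the coefficient of degree $\left\lfloor \frac{r-1}{2}\right\rfloor$ to an arbitrary connected graph. (2) The cographic case does not follow from the graphic one by duality: there is no known usable relation between $P_M(t)$ and $P_{M^*}(t)$, since the Kazhdan--Lusztig--Stanley inversion in Remark \ref{conv} relates $P_M$ to $Q_M$ inside the incidence algebra of $\mathscr{L}(M)$, not to the dual matroid, whose lattice of flats is a completely different object. (3) The non-cancellation statement for $2$- and $3$-sums is precisely the missing ingredient, as you acknowledge; appealing to the intersection-cohomology interpretation of the coefficients in \cite{singhodgetheory} gives non-negativity for a single matroid, but it does not by itself compare the top coefficient of a $k$-sum with top-degree data of its parts, and no such comparison is currently available.

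A further caution: even the direct-sum product formula would not "close the induction immediately" in the way you suggest. From $\deg P_{M_i}\le\left\lfloor \frac{r_i-1}{2}\right\rfloor$ one only gets $\deg P_{M_1\oplus M_2}\le\left\lfloor \frac{r_1-1}{2}\right\rfloor+\left\lfloor \frac{r_2-1}{2}\right\rfloor$, which is strictly smaller than $\left\lfloor \frac{r_1+r_2-1}{2}\right\rfloor$ unless both ranks are odd; direct sums of non-degenerate matroids are therefore typically degenerate, which is exactly why connectivity appears in the hypothesis of Conjecture \ref{nondegconj} and why any inductive scheme through sum operations must track parity and connectivity very carefully. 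In short, the proposal identifies a sensible skeleton for attacking the conjecture but proves none of the base cases and none of the inductive steps; it should be presented as a strategy, not as a proof.
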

Our main aim is to develop new results for $P_M(t)$ and $Q_M(t)$ in a purely combinatorial way, to give partial answers to Conjecture \ref{nondegconj}.
In Section \ref{coefficients},
we give an explicit definition of the coefficient of degree 1 of $Q_M(t)$ (Theorem \ref{tQ}). 
We then move on to consider the class of modular matroids, which are proven to be the only matroids with a zero-degree Kazhdan-Lusztig polynomial. In a sense, in this setting, modularity is equivalent to maximal degeneracy, since, even for high-rank matroids, $P_M(t)$ remains constant.
In Section \ref{conjecture} we give some results on how non-degeneracy is linked to $Q_M(t)$ and observe how even and odd rank matroids behave quite differently. Lastly, Theorem \ref{modregular} settles the Conjecture for this class of matroids, by showing that rank-3 or higher modular matroids are either not connected or not regular.
\\

\textbf{Acknowledgements.} The author is very grateful to June Huh, for kindly pointing out an obvious mistake in a previous version of the article, and to Luca Moci, Michele D'Adderio, Simone Billi and Aleksandr Dzhugan for the thoughtful discussions and the helpful comments.

\section{Background and notation}
We begin our work by setting all the notation needed in the rest of the paper. Many invariants defined on matroids can actually be calculated employing an order-theoretic approach by working on their family of flats, which form a lattice by inclusion. To start, we recall the definition of a geometric lattice.
\begin{dfn}
A lattice $\mathscr{L}$ is said to be atomic if each non-minimal element can be obtained as a join of its atoms. It is semimodular if it has a rank function $\rho$ such that
$$\rho(F)+\rho(G) \geq \rho(F\vee G) + \rho(F\wedge G),$$
for all $F,G \in \mathscr{L}$. If $\mathscr{L}$ is atomic and semimodular it is said to be geometric.
\end{dfn}
We are also interested in the underlying matroidal structure of $\mathscr{L}$. It is a theorem by Birkhoff \cite{birk} that every geometric lattice arises as a lattice of flats of a matroid. If $M$ is a matroid with lattice of flats isomorphic to $\mathscr{L}$ we will write $\mathscr{L}=\mathscr{L}(M)$, omitting the $M$ if the matroid is clear from the context. In this sense, we will also say that $\mathscr{L}$ is a matroid.\\
Unless otherwise stated, we will always work with simple matroids of rank $r$. Given a simple matroid $M$ we denote with $E$ its ground set (which is equal to the set of atoms of $\mathscr{L}$), and $\chi_M(t)$ its characteristic polynomial. Moreover, we call $[\emptyset, F]\subset \mathscr{L}$ the restriction of $M$ to $F$, denoted with $M_F$, and we call $[F,E]\subset \mathscr{L}$ the contraction of $M$ by $F$ denoted by $M^F$. It is important to observe that simplicity is easily lost when performing contractions and needs to be handled carefully in recursions; if a proposition needs to be stated differently in the non-simple case we will do so; otherwise we can always suppose to work with a simplified version of matroids (removing all loops and parallel elements).\\
The uniform matroid of rank $k$ on a ground set $|E|=n$ will be denoted with $U_{k,n}$. The Boolean matroid, corresponding to the Boolean lattice, will be denoted with $B_n$.\\
We denote with $w_k$ and $W_k$, respectively, the Whitney numbers of first and second kind. We recall that $\chi_M(t)=\sum_{k=0}^r w_kt^{r-k}$ and that $W_k$ is the number of rank-k flats in the matroid.
See \cite{oxley} and \cite{white} for exact definitions on matroids and geometric lattices.\\

The notion of modularity in a lattice can be stated in several different ways, but we collect in the following proposition the different characterizations we will need for our results.
\begin{prop}
For a lattice $\mathscr{L}$ the following statements are equivalent
\begin{itemize}
    \item[i)] $\mathscr{L}$ is modular,
    \item[ii)] $\mathscr{L}$ has a rank function $\rho$ such that $\rho(F)+\rho(G) = \rho(F\vee G) +\rho(F\wedge G)$ for any two elements $F$ and $G$,
    \item[iii)] (Diamond isomorphism Theorem) For every $F$ and $G$ in $\mathscr{L}$ the intervals
    $$[F\wedge G, F]\cong [G, F\vee G]$$
    are isomorphic.
\end{itemize}
If, moreover, $\mathscr{L}$ is geometric, then $i)$ is also equivalent to
\begin{itemize}
    \item[iv)] $W_1 = W_{r-1}$,
    \item[v)] $\mathscr{L}$ is the product of a boolean matroid and a projective geometry,
    \item[vi)] The dual lattice $\mathscr{L}^* = (\mathscr{L},\geq)$ is geometric.
\end{itemize}
\end{prop}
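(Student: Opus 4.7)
The plan is to verify each equivalence separately, since these are all classical facts drawn from the theory of modular lattices, available for example in \cite{white} and \cite{oxley}. For the first group of equivalences (i)--(iii), which hold in any lattice of finite height, I would proceed by the cycle (i) $\Rightarrow$ (ii) $\Rightarrow$ (iii) $\Rightarrow$ (i). The step (i) $\Rightarrow$ (ii) follows from applying the modular identity $a \vee (b \wedge c) = (a \vee b) \wedge c$ (valid when $a \leq c$) together with an induction on rank, using the semimodular inequality as a base case. For (ii) $\Rightarrow$ (iii), the natural maps $\phi \colon x \mapsto x \vee G$ from $[F \wedge G, F]$ to $[G, F \vee G]$ and $\psi \colon y \mapsto y \wedge F$ in the reverse direction are rank-preserving by (ii), and a short computation shows they are mutual inverses. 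Finally, (iii) $\Rightarrow$ (i) specialises the diamond isomorphism to a triple $a \leq c$ and arbitrary $b$ to recover the modular identity.

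For the additional equivalences that require the geometric hypothesis, I would argue as follows. The implication (v) $\Rightarrow$ (i) is immediate, since boolean matroids and projective geometries are modular and modularity is preserved under products; the converse (i) $\Rightarrow$ (v) is the classical Birkhoff decomposition of a modular geometric lattice into indecomposable factors. For (i) $\Leftrightarrow$ (vi), the rank equality in (ii) is symmetric in $F$ and $G$, so modularity of $\mathscr{L}$ forces the semimodular inequality on $\mathscr{L}^*$, while atomicity of $\mathscr{L}^*$ (that is, coatomicity of $\mathscr{L}$) follows from the product structure in (v). Conversely, if both $\mathscr{L}$ and $\mathscr{L}^*$ are semimodular, combining the two inequalities upgrades them to the rank equality (ii).

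The main obstacle will be the equivalence (i) $\Leftrightarrow$ (iv). The direction (i) $\Rightarrow$ (iv) is easy via (vi): the number of atoms of $\mathscr{L}$ equals the number of atoms of $\mathscr{L}^*$, which is the number of coatoms of $\mathscr{L}$. The converse (iv) $\Rightarrow$ (i) is the equality case of the Dowling--Wilson top-heavy theorem; while the inequality $W_1 \leq W_{r-1}$ for geometric lattices has only recently been proved in full generality, the fact that the equality $W_1 = W_{r-1}$ already forces modularity is a more classical result that I would quote from the literature rather than reprove here, since a self-contained argument would essentially amount to running Kung's modularisation machinery.
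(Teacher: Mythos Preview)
The paper does not actually prove this proposition; it records it as background and supplies only references in the paragraph that follows: condition (iv) is cited as the Hyperplane Theorem from \cite{white}, Chapter~8; condition (v) is attributed to Birkhoff \cite{birk2}; and for (vi) the paper notes that geometric lattices are always dually atomic \cite{stern}, leaving the semimodularity of $\mathscr{L}^*$ as the remaining check. Your sketch therefore goes well beyond what the paper offers, and your arguments for the cycle (i)--(iii), for (v), and for (vi) are sound.

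There is one slip. In your route for (i) $\Rightarrow$ (iv) you write that, via (vi), ``the number of atoms of $\mathscr{L}$ equals the number of atoms of $\mathscr{L}^*$, which is the number of coatoms of $\mathscr{L}$.'' The second equality is a tautology, but the first does not follow from (vi): knowing only that $\mathscr{L}^*$ is geometric tells you nothing about how $W_1(\mathscr{L})$ compares to $W_1(\mathscr{L}^*)=W_{r-1}(\mathscr{L})$. What you are implicitly using is that $\mathscr{L}$ is \emph{self-dual}, i.e.\ $\mathscr{L}\cong\mathscr{L}^*$, and that comes from (v), not (vi): Boolean lattices and finite projective geometries are each self-dual, and self-duality is preserved under direct products. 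So route the implication (i) $\Rightarrow$ (iv) through (v) instead, or simply cite the Hyperplane Theorem in both directions as the paper does.
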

Condition $iv)$ is the Hyperplane Theorem and can be found for example in \cite{white}, Chapter 8. Condition $v)$ is a Theorem by Birkhoff (\cite{birk2}); we recall that a projective geometry is a geometric lattice where
\begin{itemize}
    \item Every rank-2 flat contains at least three atoms,
    \item Every hyperplane intersect every rank-2 flat.
\end{itemize}
Regarding condition $vi)$, geometric lattices are always dually atomic (\cite{stern}), hence what one has left to prove is the semimodularity of $\mathscr{L}^*$. Moreover, the Top-Heavy Conjecture, proved in \cite{singhodgetheory}, implies that
\begin{cor}
If $\mathscr{L}$ is geometric and modular, its sequence of Whitney numbers of the second kind is symmetric, i.e. for all $k$
$$W_k = W_{r-k}.$$
\end{cor}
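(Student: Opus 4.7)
The plan is to combine condition $vi)$ of the preceding proposition with the Top-Heavy theorem (cited from \cite{singhodgetheory}), applied to both $\mathscr{L}$ and its dual $\mathscr{L}^*$. Recall that the Top-Heavy theorem states that for any geometric lattice of rank $r$, one has $W_k \leq W_{r-k}$ whenever $k \leq r/2$.

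First I would apply Top-Heavy directly to $\mathscr{L}$, which by hypothesis is geometric, to obtain the inequality $W_k \leq W_{r-k}$ for all $k \leq r/2$. Next I would invoke condition $vi)$ of the proposition, which tells us that since $\mathscr{L}$ is modular and geometric, the dual lattice $\mathscr{L}^* = (\mathscr{L}, \geq)$ is again geometric (of the same rank $r$). Denoting its Whitney numbers of the second kind by $W_k^*$, the definition of the dual gives $W_k^* = W_{r-k}$ for every $k$, since rank-$k$ elements of $\mathscr{L}^*$ are exactly the rank-$(r-k)$ flats of $\mathscr{L}$. Applying Top-Heavy to $\mathscr{L}^*$ then yields $W_k^* \leq W_{r-k}^*$ for $k \leq r/2$, i.e. $W_{r-k} \leq W_k$ for $k \leq r/2$.

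Putting the two inequalities together gives $W_k = W_{r-k}$ for every $k \leq r/2$, and hence for every $k$ by symmetry of the statement. There is no real obstacle here: the argument is essentially a two-line deduction once one observes that modularity upgrades the Top-Heavy inequality on $\mathscr{L}$ to the same inequality in the opposite direction via the dually-geometric structure provided by $vi)$. The only point worth double-checking is the straightforward identification $W_k^* = W_{r-k}$, which follows immediately from the fact that passing to the dual lattice reverses the rank function.
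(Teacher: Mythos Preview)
Your argument is correct and is exactly the deduction the paper has in mind: the Corollary is stated immediately after condition $vi)$ and introduced as a consequence of the Top-Heavy theorem, so the intended proof is precisely to apply Top-Heavy to both $\mathscr{L}$ and to the geometric dual $\mathscr{L}^*$ furnished by $vi)$, obtaining the two opposite inequalities.
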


Theorem 2.2 in \cite{kl} and Theorem 1.2 in \cite{invkl} give us the definition of the Kazhdan-Lusztig polynomial and the inverse Kazhdan-Lusztig polynomial of a matroid, which we will now recall.
\begin{teo}\label{PQ}
There is a unique way to assign to each matroid $M$ two polynomials $P_M(t), Q_M(t) \in \mathbb{Z}[t]$ such that
\begin{itemize}
    \item[i)] If $r = 0$, then $P_M(t) = 1$.
    \item[ii)] If $r > 0$, then $\deg P_M(t) <\frac{1}{2}r$.
    \item[iii)] For every matroid $M$ we have
    $$t^r P_M(t^{-1}) = \sum_{F \in \mathscr{L}(M)}\chi_{M_F}(t)P_{M^F}(t).$$
    \item[i')] If $r = 0$, then $Q_M(t) = 1$.
    \item[ii')] If $r > 0$, then $\deg Q_M(t) <\frac{1}{2}r$.
    \item[iii')] For every matroid $M$ we have
    $$t^r \cdot (-1)^r Q_M(t^{-1}) = \sum_{F \in \mathscr{L}(M)}(-1)^{\rk M_F} Q_{M_F}(t)\cdot t^{\rk M^F}\chi_{M^F}(t^{-1}).$$
\end{itemize}
\end{teo}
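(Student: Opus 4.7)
The plan is to proceed by strong induction on $r = \rk M$, constructing $P_M$ and $Q_M$ in parallel; the case $r = 0$ is immediate from (i) and (i'). For the inductive step for $P_M$, assume $P_{M'}$ has been built for every matroid $M'$ with $\rk M' < r$. On the right-hand side of (iii), the contribution at $F = \emptyset$ equals $P_M(t)$ (since $\chi_{M_\emptyset}(t) = 1$ and $M^\emptyset = M$), and this is the only unknown; rewriting (iii) as
\begin{equation*}
t^r P_M(t^{-1}) - P_M(t) \;=\; R_M(t), \qquad R_M(t) := \sum_{\emptyset \neq F \in \mathscr{L}(M)} \chi_{M_F}(t)\, P_{M^F}(t),
\end{equation*}
the right-hand side is completely determined by the inductive hypothesis.

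The degree bound (ii) forces the support of $P_M(t)$ to lie in $\{0,\dots,\lceil r/2\rceil-1\}$, so the support of $t^r P_M(t^{-1})$ lies in $\{\lfloor r/2\rfloor + 1,\dots,r\}$; these two sets are disjoint. Matching the low-degree coefficients of the displayed equation therefore forces the only possible definition $P_M(t) := -[R_M(t)]_{<r/2}$, where $[\,\cdot\,]_{<r/2}$ denotes truncation to degrees strictly below $r/2$; this proves uniqueness and makes (ii) automatic. For existence one still has to verify the top-degree coefficients and, when $r$ is even, the coefficient of $t^{r/2}$; these requirements together amount to the anti-palindromic identity $t^r R_M(t^{-1}) = -R_M(t)$.

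This last identity is the heart of the proof and is the standard Kazhdan--Lusztig--Stanley argument: one expands each $P_{M^F}$ in $R_M$ via the recursion already established for the smaller matroid $M^F$, obtaining a double sum indexed by pairs of flats $\emptyset \neq F \leq G$ in $\mathscr{L}(M)$; after re-grouping by the upper flat $G$, the inner sum collapses via the defining identity of the characteristic polynomial on each interval $[F,G]$, and the required symmetry drops out. I would follow the proof of Elias, Proudfoot and Wakefield in \cite{kl}, itself modelled on Stanley's argument for general $P$-kernels.

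The construction of $Q_M$ is formally dual: in (iii') the single unknown term sits at $F = E$, where $Q_{M_E} = Q_M$, $M^E$ has rank $0$ and $t^{\rk M^E}\chi_{M^E}(t^{-1}) = 1$. The same extract-unknown-and-truncate recipe uniquely determines $Q_M(t) := -[R'_M(t)]_{<r/2}$ for an analogous known polynomial $R'_M$, and the palindromic identity required for existence is verified by the same kind of double-sum reorganisation. In both constructions the genuinely nontrivial step is the palindromy check; everything else is a matter of bookkeeping degrees and signs.
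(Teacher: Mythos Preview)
The paper does not prove this theorem; it is explicitly recalled from Theorem~2.2 of \cite{kl} and Theorem~1.2 of \cite{invkl}, with Remark~\ref{conv} pointing to the general Kazhdan--Lusztig--Stanley framework. Your sketch reproduces exactly that standard argument (isolate the unique unknown term, truncate to enforce the degree bound, then verify the anti-palindromic identity for the remainder via the $P$-kernel property of $\chi$), so there is nothing to compare against and no gap to flag.
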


\begin{oss}\label{conv}
These polynomials also arise in the more general framework of Kazhdan-Lusztig-Stanley theory. We call $f$ the
unique function that satisfies
$$\bar{f} = \chi * f,$$
where $\bar{f}$ is the involution $\bar{f}(t) = t^r f(t^{-1})$ and $*$ denotes the convolution in the incidence algebra of $\mathscr{L}$. Then $f$ is called the right Kazhdan-Lusztig-Stanley function associated to $\chi$. In this setting then one has that
$$
P_M(t) = \left(f\right)_{\emptyset,E}(t).
$$
Then, the polynomial $Q_M(t)$ can be found to be equal to
$$Q_M(t) = (-1)^r\left(f^{-1}\right)_{\emptyset, E}(t),$$
where $f^{-1}$ is the inverse of $f$ in the incidence algebra of $\mathscr{L}$. This justifies the name "inverse polynomial" for $Q_M(t)$, since it arises as the inverse of $P_M(t)$ with respect to the convolution product. In particular, this means that
$$\sum_{F \in \mathscr{L}(M)} P_{M_F}(-1)^{\rk M^F}Q_{M^F}=\sum_{F \in \mathscr{L}(M)} P_{M^F}(-1)^{\rk M_F}Q_{M_F}=0.$$
\end{oss}

\begin{oss}\label{q}
To ease some of the computations, we will also use this version of condition $iii')$,
$$
Q_M(t) = (-1)^r \sum_{F \in \mathscr{L}(M)}(-1)^{\rk F}t^{\rk F} Q_{M_F}(t^{-1})\cdot \chi_{M^F}(t).
$$
\end{oss}

\section{New results on the coefficients of $P_M(t)$ and $Q_M(t)$}\label{coefficients}

We will denote the coefficient of $t^j$ in a polynomial $p(t)$ as $[t^j]p(t)$.

It is observed in Proposition 2.4 in \cite{invkl} that
\begin{prop}\label{t0}
$$[t^0]Q_M(t) = |[t^0]\chi_M(t)|=|w_r|=|\mu(M)|.$$
\end{prop}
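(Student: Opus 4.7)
The plan is to read off $[t^0] Q_M(t)$ directly from the alternative recursion in Remark \ref{q},
$$Q_M(t) = (-1)^r \sum_{F \in \mathscr{L}(M)}(-1)^{\rk F}t^{\rk F} Q_{M_F}(t^{-1})\cdot \chi_{M^F}(t),$$
and argue that every flat $F$ of positive rank contributes nothing to the constant term, leaving only the summand for $F = \emptyset$.

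For the first step, I would show that whenever $\rk F \geq 1$ the factor $t^{\rk F} Q_{M_F}(t^{-1})$ is divisible by $t$. Indeed, its monomials carry exponents of the form $\rk F - k$, where $k$ ranges over the degrees of monomials appearing in $Q_{M_F}(t)$, and condition $ii')$ of Theorem \ref{PQ} forces $k < \frac{1}{2}\rk F$, so $\rk F - k \geq 1$. Multiplying by the polynomial $\chi_{M^F}(t)$ preserves this divisibility by $t$, so none of these summands affect $[t^0]$. The remaining term $F = \emptyset$ equals $\chi_M(t)$ (since $Q_{M_\emptyset}(t) = 1$ and $M^\emptyset = M$), and extracting constant coefficients yields $[t^0] Q_M(t) = (-1)^r w_r$.

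To identify this with $|w_r|$ and $|\mu(M)|$, I would appeal to the classical fact that the Möbius function of a rank-$r$ geometric lattice satisfies $(-1)^r \mu(\emptyset, E) \geq 0$, together with the identity $\mu(M) = w_r$ already built into the definition of $\chi_M(t)$. The argument is essentially bookkeeping in the recursion; the only step deserving care is the degree bound on $Q_{M_F}$, without which flats of intermediate rank could in principle contribute. I do not anticipate any serious obstacle.
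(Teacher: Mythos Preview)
Your argument is correct. The paper itself does not supply a proof of this proposition but simply records it as Proposition~2.4 of \cite{invkl}; your approach---extracting the constant term from the recursion in Remark~\ref{q} and observing that flats of positive rank contribute nothing because $t^{\rk F}Q_{M_F}(t^{-1})$ has valuation at least $1$---is exactly the technique the paper deploys in the very next result (Theorem~\ref{tQ}) to compute $[t]Q_M(t)$, so your write-up is fully in line with the surrounding methodology.
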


Our next goal is to compute the coefficient of $t$ in the same fashion of Proposition 2.12 in \cite{kl}. To do that we recall the definition of the doubly-indexed Whitney numbers.
\begin{dfn}
For all natural numbers $i$ and $j$, let
$$
w_{i,j} := \sum_{\rk F=i, \rk G=j}\mu(F,G) \hspace{0.5 cm} \text{and} \hspace{0.5 cm} W_{i,j}:=\sum_{\rk F = i, \rk G = j}\zeta(F,G),
$$
where $\mu$  is the Moebius function of $\mathscr{L}(M)$ and $\zeta$ is its inverse in the incidence algebra of $\mathscr{L}(M)$.
If $i$ is set to 0, we recover the Whitney numbers of first and second kind.
\end{dfn}

\begin{teo}\label{tQ}
For every matroid $M$,
$$[t]Q_M(t)=|w_{1,r}|-|w_{0,r-1} |.$$
\end{teo}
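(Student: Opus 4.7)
The strategy is to apply the recursion stated in Remark \ref{q} and extract the coefficient of $t$ from both sides, using the degree bound in condition (ii') of Theorem \ref{PQ} to collapse the sum to just the lowest-rank flats. This mirrors the approach used in Proposition 2.12 of \cite{kl} to obtain $[t]P_M(t)$. Concretely, I would start from
$$Q_M(t) = (-1)^r \sum_{F \in \mathscr{L}(M)} (-1)^{\rk F}\, t^{\rk F}\, Q_{M_F}(t^{-1}) \cdot \chi_{M^F}(t),$$
and make the key observation that, writing $k := \rk F$, the bound $\deg Q_{M_F} < k/2$ forces $t^{k} Q_{M_F}(t^{-1})$ to be a polynomial whose minimum-degree monomial has degree $\lceil (k+1)/2 \rceil$. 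For $k \geq 2$ this is at least $2$, and multiplying by $\chi_{M^F}(t)$ (which has nonnegative minimum degree) preserves that lower bound. Consequently, only flats $F$ of rank $0$ or $1$ can contribute to $[t]Q_M(t)$.

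Next I would evaluate the two surviving contributions. For $F = \emptyset$ the factor $t^0 Q_{\emptyset}(t^{-1}) = 1$ reduces the summand to $\chi_M(t)$, whose coefficient of $t$ is $w_{r-1}$. For each atom $F$, using $Q_{M_F} \equiv 1$ on a rank-one matroid, the factor $t\, Q_{M_F}(t^{-1})$ equals $t$, so the contribution to $[t]$ is the constant term of $\chi_{M^F}(t)$, namely $w_{r-1}(M^F) = \mu(F, E)$. Collecting the two, one obtains
$$[t]Q_M(t) \;=\; (-1)^r\Bigl( w_{r-1} \;-\; \sum_{\rk F = 1} \mu(F, E)\Bigr) \;=\; (-1)^r\bigl( w_{0,r-1} - w_{1,r}\bigr).$$

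To finish, I would invoke the standard sign property of the Möbius function on a geometric lattice, $\mathrm{sign}\,\mu(F,G) = (-1)^{\rk G - \rk F}$. Every summand in both $w_{0,r-1}$ and $w_{1,r}$ therefore carries the common sign $(-1)^{r-1}$, and pulling this factor out of the parenthesis above yields $[t]Q_M(t) = |w_{1,r}| - |w_{0,r-1}|$, which is the claim. The computation is essentially mechanical once the degree inequality is exploited; the only delicate point is the sign bookkeeping in the final step, together with a small sanity check of the edge cases $k=0,1$ (where the inequality $\deg Q_{M_F} < k/2$ is vacuous, but the explicit values $Q_{M_F}\equiv 1$ make the resulting monomials transparent). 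I do not anticipate a genuine obstacle beyond careful bookkeeping.
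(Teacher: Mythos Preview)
Your proposal is correct and follows essentially the same approach as the paper's proof: both start from the recursion in Remark~\ref{q}, use the degree bound to discard flats of rank at least $2$, evaluate the rank-$0$ and rank-$1$ contributions explicitly via $Q_{M_F}\equiv 1$, and finish with the alternating-sign property of the M\"obius function on a geometric lattice. The only cosmetic difference is that you phrase the surviving terms directly as $w_{0,r-1}$ and $w_{1,r}$, whereas the paper first writes them as $|[t]\chi_M(t)|$ and $\sum_i |[t^0]\chi_{M/i}(t)|$ before making the identification.
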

\begin{proof}
We just need to compute the coefficient of $t$ in the right-hand side of the equation in Remark \ref{q}. First of all we notice that flats with rank greater than or equal to 2 will give no contribution to the coefficient of $t$ because the terms of $Q_{M_F}(t^{-1})$ have degrees ranging in $(-\frac{\rk F}{2}, 0]$,
which means that the lowest term of $$t^{\rk F}Q_{M_F}(t^{-1})\chi_{M^F}(t)$$ has degree strictly greater than 1. If we consider $F=\emptyset$, then the corresponding term in the sum is reduced to $(-1)^{\rk M} \chi_M(t)$,
since $Q_{M_\emptyset}(t) = 1$.
Then we get a contribution for the coefficient of $t$ of $(-1)^{\rk M}[t]\chi_M(t)$. The sign of $[t]\chi_M(t)$ is opposite to the parity of $\rk M$, which means that
$$(-1)^{\rk M}[t]\chi_M(t) = -|[t]\chi_M(t)|.$$

Let us now consider a rank-1 flat, $i\subset E$. Combining $ii')$ and \ref{t0} we get that $Q_{M_i}(t) = Q_{B_1}(t) \equiv 1$, thus its corresponding term in the sum is 
$$(-1)^{\rk M -1} t \chi_{M/i}(t).$$
This means that each of these terms gives a contribution to the coefficient of $t$ of $(-1)^{\rk M -1}[t^0]\chi_{M/i}(t)$, which, for the same reason as before, is equal to $|[t^0]\chi_{M/i}(t)|$. Putting everything together tells us that
$$[t]Q_M(t) = \left|\sum_{i\in E} [t^0]\chi_{M/i}(t) \right|- \left|[t]\chi_M(t)\right|.$$
A straightforward substitution using the definition of $\chi_M(t)$ gives us the desired result.
\end{proof}

We now continue by investigating the behaviour of modular matroids with respect to their Kazhdan-Lusztig polynomials. Proposition 2.14 in \cite{kl} gives the following condition on $P_M(t)$ regarding the modularity of $\mathscr{L}(M)$.

\begin{prop}
The following conditions are equivalent 
\begin{itemize}
    \item[i)] $P_M(t)=1$,
    \item[ii)] $[t]P_M(t) = 0$,
    \item[iii)] $\mathscr{L}(M)$ is modular.
\end{itemize}
\end{prop}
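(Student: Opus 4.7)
The strategy is to prove the cycle $(i) \Rightarrow (ii) \Rightarrow (iii) \Rightarrow (i)$, the first implication being immediate. For $(ii) \Rightarrow (iii)$, I would adapt the technique of Theorem \ref{tQ} to the defining recursion $t^r P_M(t^{-1}) = \sum_{F} \chi_{M_F}(t) P_{M^F}(t)$ of condition $iii)$ of Theorem \ref{PQ}. The coefficient of $t^{r-1}$ on the left-hand side is exactly $[t]P_M(t)$. On the right, the strict bound $\deg P_{M^F}(t) < (r-\rk F)/2$ implies $\deg(\chi_{M_F}(t) P_{M^F}(t)) < (r + \rk F)/2$, so a flat can contribute only when $\rk F \in \{r-1, r\}$. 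Each hyperplane contributes the leading coefficient $1$ of its $\chi_{M_F}(t)$, for a total of $W_{r-1}$, while $F = E$ contributes $[t^{r-1}]\chi_M(t) = w_1 = -W_1$. Hence $[t]P_M(t) = W_{r-1} - W_1$, and hypothesis (ii) reads $W_1 = W_{r-1}$, which is precisely the Hyperplane Theorem characterization $iv)$ of modularity.

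For $(iii) \Rightarrow (i)$, I would induct on the rank $r$, with $r \leq 1$ trivial. Since every upper interval $[F, E]$ of a modular geometric lattice is again modular geometric, each contraction $M^F$ is modular, so by the inductive hypothesis $P_{M^F}(t) = 1$ for every flat $F \neq \emptyset$. Isolating the $F = \emptyset$ term, the recursion reduces to
$$t^r P_M(t^{-1}) - P_M(t) = \sum_{F \neq \emptyset} \chi_{M_F}(t).$$
The key step is the auxiliary identity $\sum_{F \in \mathscr{L}(M)} \chi_{M_F}(t) = t^r$ for modular $M$. I would establish it via characterization $v)$: both sides are multiplicative under direct sum of matroids, so the identity reduces to the cases of a Boolean matroid (where it is the classical $\sum_k \binom{n}{k}(t-1)^k = t^n$) and of a projective geometry $PG(r-1, q)$ (a $q$-binomial identity verifiable by expansion using Gaussian binomial coefficients). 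Granted the identity, the recursion rewrites as $t^r(P_M(t^{-1}) - 1) = P_M(t) - 1$; comparing coefficients while invoking the degree bound $\deg P_M(t) < r/2$ forces every non-constant coefficient of $P_M(t)$ to vanish, yielding $P_M(t) = 1$.

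The main obstacle is the auxiliary identity $\sum_F \chi_{M_F}(t) = t^r$ in the modular case. A cleaner alternative that bypasses this computation is to assume the multiplicativity of Kazhdan-Lusztig polynomials under direct sum (a general property of the recursion) and use characterization $v)$ to reduce directly to the explicit verifications $P_{B_n}(t) = 1$ and $P_{PG(r-1,q)}(t) = 1$; both then follow from inspection of the recursion on lattices with very rigid flat structure. Either route is combinatorial and avoids circularity, since both ultimately rest on the Birkhoff factorization $v)$ rather than on the conclusion being proved.
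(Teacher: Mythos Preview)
The paper does not give its own proof of this proposition; it is quoted verbatim as Proposition~2.14 of \cite{kl}. So there is nothing to compare against, and your proposal stands as an independent argument.

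Your argument is essentially correct. The computation $[t]P_M(t)=W_{r-1}-W_1$ that you extract for $(ii)\Rightarrow(iii)$ is exactly Proposition~2.12 of \cite{kl} (the paper alludes to it just before Theorem~\ref{tQ}), and combining it with the Hyperplane Theorem characterization $iv)$ is precisely how Elias--Proudfoot--Wakefield deduce the equivalence. For $(iii)\Rightarrow(i)$, both of your suggested routes work; the inductive reduction of the recursion to $t^r(P_M(t^{-1})-1)=P_M(t)-1$ via the identity $\sum_{F}\chi_{M_F}(t)=t^r$ is clean and correct.

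One small caveat worth recording: when you invoke characterization $v)$ to reduce the auxiliary identity to Boolean matroids and projective geometries, remember that ``projective geometry'' in Birkhoff's theorem includes non-Desarguesian projective planes, which are not of the form $PG(2,q)$ for any $q$. Your $q$-binomial verification therefore does not literally cover rank~$3$. This is easily patched: for any projective plane of order $n$ one has $n^2+n+1$ points, $n^2+n+1$ lines each isomorphic to $U_{2,n+1}$, and $\chi_M(t)=(t-1)(t-n)(t-n^2)$, and a direct expansion shows
\[
1+(n^2+n+1)(t-1)+(n^2+n+1)(t-1)(t-n)+(t-1)(t-n)(t-n^2)=t^3.
\]
For rank $\geq 4$ the Veblen--Young theorem guarantees the geometry is a genuine $PG(r-1,q)$, where your $q$-binomial identity applies. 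Alternatively, your second route via multiplicativity of $P_M$ under direct sums sidesteps this issue entirely, since one only needs $P=1$ for connected projective geometries, and in rank~$3$ this is immediate from $\deg P_M<3/2$ together with $[t]P_M=W_2-W_1=0$.
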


In a sense then, modular matroid are the most degenerate ones and it should be clear by now why we are interested in studying them with regards to Conjecture \ref{nondegconj}. We can now give necessary and sufficient conditions for modularity with respect to $Q_M(t)$.

\begin{teo}
The modularity of $\mathscr{L}(M)$ is equivalent to $\deg Q_M(t) = 0$.
\end{teo}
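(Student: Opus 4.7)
The plan is to prove the two directions of the equivalence separately, using the preceding Proposition together with the incidence-algebra framework of Remark \ref{conv}.

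For the forward direction ($\mathscr{L}(M)$ modular $\Rightarrow \deg Q_M(t)=0$), I observe that modularity is inherited by intervals of a modular lattice, so every interval $[\emptyset,F]\cong\mathscr{L}(M_F)$ is modular, and the preceding Proposition gives $P_{M_F}(t)=1$ for every $F\in\mathscr{L}(M)$; the same argument applied to arbitrary intervals $[F,G]$ shows $P_{[F,G]}(t)=1$ throughout the incidence algebra. In the incidence-algebra language of Remark \ref{conv}, the KLS function $f$ is therefore identically the zeta function of $\mathscr{L}(M)$, so its convolution inverse is the Möbius function $\mu$, and
\begin{equation*}
Q_M(t)=(-1)^r\mu_{\mathscr{L}(M)}(\emptyset,E)=|\mu(M)|,
\end{equation*}
a positive integer constant because the Möbius function of a rank-$r$ geometric lattice has sign $(-1)^r$; in particular $\deg Q_M=0$.

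For the reverse direction ($\deg Q_M=0\Rightarrow \mathscr{L}(M)$ modular) I plan to proceed by strong induction on $r$, with the base cases $r\le 2$ being immediate since every simple matroid of rank at most $2$ is modular. Under the hypothesis, Proposition \ref{t0} forces $Q_M(t)=|\mu(M)|$, and Theorem \ref{tQ} turns the vanishing $[t]Q_M=0$ into the equality $|w_{1,r}|=|w_{0,r-1}|$. My plan is to combine this equality with the convolution identity of Remark \ref{conv} applied to proper subintervals, and with the inductive hypothesis applied to the minors $M^a$ and $M_H$ for atoms $a$ and hyperplanes $H$, in order to deduce that each such proper minor is modular; one then lifts modularity back to $M$ via characterization (iv), i.e., by showing $W_1=W_{r-1}$.

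The principal obstacle is this last lifting step: the matroid $U_{3,4}$ has all its proper minors modular without being modular itself, so the argument must use the full strength of the equality $|w_{1,r}|=|w_{0,r-1}|$, not merely modularity of minors. I expect the crucial ingredient to be a Dowling--Wilson-type sharpening of the Top-Heavy inequality (its equality case for signed Whitney sums at ranks $1$ and $r-1$), which should close the inductive step by translating the signed equality into the unsigned equality required by characterization (iv).
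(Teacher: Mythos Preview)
Your forward direction is correct and in fact cleaner than the paper's. The paper argues by induction on $r$ through the recursion $(-1)^rQ_M=-\sum_{F\neq\emptyset}P_{M_F}(-1)^{r-\rk F}Q_{M^F}$, using that minors of a modular matroid are modular. You instead observe that modularity of every interval forces the KLS function to equal $\zeta$ throughout the incidence algebra, whence $f^{-1}=\mu$ and $Q_M=(-1)^r\mu(\emptyset,E)=|\mu(M)|$ in one stroke; this bypasses the induction entirely and even gives the exact constant.

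The reverse direction, however, is not a proof but a plan with a real gap. Two problems. First, to invoke the inductive hypothesis on $M^a$ or $M_H$ you would need $\deg Q_{M^a}=0$ or $\deg Q_{M_H}=0$, and nothing you wrote derives this from $\deg Q_M=0$; the hypothesis on $Q$ does not obviously descend to minors, so the induction never starts. Second, your proposed closing step---passing from the signed equality $|w_{1,r}|=|w_{0,r-1}|$ to the unsigned $W_1=W_{r-1}$ via an ``equality case of a Dowling--Wilson-type inequality''---is not an available theorem; you yourself flag $U_{3,4}$ as showing that this step carries essential weight. As written you are using only the single condition $[t]Q_M=0$, discarding the vanishing of all higher coefficients, and there is no reason to believe that weaker hypothesis alone forces modularity.

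The paper's route for this direction is quite different and worth contrasting. It argues by contradiction through the \emph{other} convolution identity of Remark~\ref{conv}, $\sum_F P_{M^F}(-1)^{\rk M_F}Q_{M_F}=0$. Taking a non-modular $M$ with $\deg Q_M=0$ that is minimal by contraction (every proper $M^F$ modular, hence $P_{M^F}=1$), one gets $P_M(t)=-\sum_{F\neq\emptyset}(-1)^{\rk F}Q_{M_F}(t)$; a second minimality reduction on the restrictions $M_F$ then forces the right-hand side to be constant, contradicting non-modularity via the preceding Proposition on $P_M$. The key idea you are missing is this interplay between the two convolution identities and a two-layer minimal-counterexample reduction, rather than attempting to squeeze modularity out of the single linear coefficient of $Q_M$.
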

\begin{proof}
The proof is by induction on $r$. From the relation
$$(-1)^rQ_M(t) = -\sum_{F\neq\emptyset }P_{M_F}(t)(-1)^{r-\rk F}Q_{M^F}(t),$$
if $\mathscr{L}(M)$ is modular, then so is the lattice of any minor of $M$. By induction then, $(-1)^rQ_M(t)$ is constant. Conversely, suppose that $M$ is not modular but $\deg Q_M(t)=0$, and that $M$ is minimal by contraction, that is $M$ is not modular and $M^F$ is modular for every $F\neq \emptyset$. Then,
$$P_M(t) =-\sum_{F\neq \emptyset}(-1)^{\rk F}Q_{M_F}(t).$$
If every $M_F$ is modular, then $P_M(t)$ is constant, which is a contradiction. Consider, then, a flat $F$ such that $M_F$ is not modular and that is minimal by restriction, that is $M_G$ is modular for every $G\subseteq F$. Since by induction hypothesis $M^G$ is modular for every flat $\emptyset \neq G\subseteq E$, then $M^G_F$ is modular as well, proving that $M_F$ is indeed modular. 
\end{proof}

\section{New results on the Conjecture of degeneracy}\label{conjecture}
We now finally move on to give some partial answers to Conjecture \ref{nondegconj}.

Let us first study how the polynomial $Q_M(t)$ behaves with respect to non-degeneracy.
\begin{teo}\label{all}
Let $M$ be a matroid of odd rank $r$. Then $$\left[t^{\left\lfloor \frac{r-1}{2}\right\rfloor}\right]P_M(t) = \left[t^{\left\lfloor \frac{r-1}{2}\right\rfloor}\right]Q_M(t).$$
In particular, a matroid $M$ of odd rank $r$ is non-degenerate if and only if $Q_M(t)$ has degree $\left\lfloor\frac{r-1}{2}\right\rfloor$.   
\end{teo}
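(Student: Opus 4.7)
The plan is to exploit the convolution identity in Remark \ref{conv}: for any matroid $M$ with $\rk M = r > 0$,
$$\sum_{F \in \mathscr{L}(M)} P_{M_F}(t)\,(-1)^{\rk M^F}\,Q_{M^F}(t) = 0.$$
Isolating the two extreme flats, $F=\emptyset$ contributes $(-1)^{r}Q_M(t)$ and $F=E$ contributes $P_M(t)$. Using that $r$ is odd, the identity rearranges as
$$P_M(t) - Q_M(t) = -\sum_{\emptyset \neq F \neq E}(-1)^{r-\rk F}\,P_{M_F}(t)\,Q_{M^F}(t).$$

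Set $d := \lfloor(r-1)/2\rfloor = (r-1)/2$. The first assertion reduces to showing that every summand on the right-hand side has degree strictly less than $d$, so that comparing the coefficient of $t^{d}$ on both sides yields the claim. For a proper flat $F$ with $\rk F = k \in \{1,\ldots,r-1\}$, the degree bounds $ii)$ and $ii')$ of Theorem \ref{PQ} give $\deg P_{M_F}(t) \leq \lfloor (k-1)/2\rfloor$ and $\deg Q_{M^F}(t) \leq \lfloor (r-k-1)/2\rfloor$. Since $r$ is odd, exactly one of $k$ and $r-k$ is even; a case split on the parity of $k$ shows that in both cases the sum of these floor bounds equals $(r-3)/2 = d - 1$. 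Hence each term in the sum has degree at most $d-1$, and the coefficient of $t^{d}$ on the right-hand side vanishes.

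The "in particular" statement is then immediate: $\deg P_M(t) \leq d$ and $\deg Q_M(t) \leq d$ by the defining degree constraints, so non-degeneracy is equivalent to $[t^{d}]P_M(t) \neq 0$, which by the first part is equivalent to $[t^{d}]Q_M(t) \neq 0$, i.e.\ $\deg Q_M(t) = d$. The main obstacle is essentially psychological: rather than unfolding the recursive definitions of $P_M$ and $Q_M$, one must recognize that the correct tool is the convolution inverse relation and that the cancellation happens uniformly, with the odd-rank hypothesis entering only to fix the sign of $(-1)^{r}Q_M(t)$ and to synchronize the parity bookkeeping in the degree estimate.
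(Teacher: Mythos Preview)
Your proof is correct and follows essentially the same route as the paper's: both use the convolution identity from Remark~\ref{conv}, isolate the contributions of $F=\emptyset$ and $F=E$, and then bound the degree of each intermediate term by $\lfloor(k-1)/2\rfloor+\lfloor(r-k-1)/2\rfloor$, exploiting that $r$ is odd so this sum is at most $(r-3)/2$. Your explicit parity split on $k$ makes the degree estimate slightly more transparent than the paper's one-line strict inequality, but the argument is the same.
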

\begin{proof}
Let $r=2k+1$; this becomes a special case of Corollary 8.8 in \cite{stan}.
We know that
$$\sum_F P_{M_F}(t)(-1)^{\rk M^F}Q_{M^F}(t) =0.$$
If $F=\emptyset$ or $F=E$ we get, respectively, $P_M(t)$ and $(-1)^{2k+1}Q_M(t)=-Q_M(t)$. Let us show that no other term in the sum can have degree $k$. If $\emptyset \neq F \neq E$, 
\begin{align*}
  \deg P_{M_F}Q_{M^F}=&\deg P_{M_F} + \deg Q_{M^F}\leq \left\lfloor \frac{\rk F -1}{2}\right\rfloor + \left\lfloor \frac{r-\rk F -1}{2}\right\rfloor \\
  <& \frac{r}{2}-1=k-\frac{1}{2}.  
\end{align*}
Hence, 
$$
0 = [t^k]0 = [t^k]\left(P_M(t)-Q_M(t)\right) = [t^k]P_M(t) - [t^k]Q_M(t).
$$
\end{proof}

The Theorem is false in general for even rank. If, for example, $r=2$, because the maximal degree is $0$ but Proposition \ref{t0} gives infinite counterexamples since $[t^0]\chi_{U_{2,n}}(t) = n-1$. More specifically, the only rank-2 matroid for which the statement holds is the Boolean matroid $B_2$.

Moreover, there exist both examples where the coefficient $\left[ t^{k-1} \right]Q_M(t)$ of a rank-($2k$) matroid is either greater or smaller than $\left[t^{k-1} \right]P_M(t)$. For example,
$$
[t^2]Q_{U_{6,7}}=14 < 21 =[t^2]P_{U_{6,7}}.
$$

Unfortunately, a statement on the degree of $Q_M(t)$ is much more difficult to get. Let us start with a preliminary result.
\begin{lem}\label{Mi}
If $M$ is simple and connected, then there exists an element $i\in E$ such that $\mathscr{L}(M^i)$ is isomorphic to the lattice of flats of a simple connected matroid.
\end{lem}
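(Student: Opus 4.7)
The plan is to recast the lemma in lattice-theoretic terms and proceed by contradiction. Observe first that the simplification of $M^i$ is a connected matroid if and only if its lattice of flats is directly indecomposable, and this lattice coincides with $\mathscr{L}(M^i) = [i,\hat{1}]_{\mathscr{L}(M)}$, since passing to the simplification does not alter the lattice of flats. So the goal becomes: find an atom $i$ of $\mathscr{L}(M)$ such that the upper interval $[i,\hat{1}]$ is directly indecomposable as a geometric lattice. The small cases are immediate: for $r = 1$ the contraction is the empty matroid, and for $r = 2$ one has $M = U_{2,n}$ and $M^i$ simplifies to $U_{1,1}$, trivially connected.

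For $r \geq 3$, suppose for contradiction that $[i,\hat{1}]$ is directly decomposable for every atom $i \in E$. Then for each $i$ there exist proper flats $F, G > i$ with $F \wedge G = i$, $F \vee G = \hat{1}$, and $\rk F + \rk G = r + 1$, so that $(F,G)$ is a modular pair in the sense of condition $ii)$ of the modularity proposition recalled above. The plan is to combine these local modular decompositions into a single direct decomposition of the whole lattice $\mathscr{L}(M)$, contradicting the direct indecomposability of $\mathscr{L}(M)$ that follows from connectedness of $M$.

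Concretely, fix an atom $a$ together with its pair $(F_1, F_2)$ and pick a second atom $b \leq F_1$ with $b \neq a$ (possible because $\rk F_1 \geq 2$), with corresponding pair $(G_1, G_2)$ decomposing $[b,\hat{1}]$. Using the uniqueness of the direct product decomposition of a geometric lattice into indecomposable factors, which corresponds to the unique decomposition of the associated simple matroid into connected components, one should be able to argue that $F_1$ appears as one of the two factors of the decomposition of $[b,\hat{1}]$, forcing $G_2 \leq F_2$ up to relabeling. Propagating this rigidity across a well chosen family of atoms should show that $(F_1, F_2)$ extends to flats $H_1, H_2$ satisfying $H_1 \wedge H_2 = \hat{0}$, $H_1 \vee H_2 = \hat{1}$, and $\rk H_1 + \rk H_2 = r$, i.e., to a direct decomposition of $\mathscr{L}(M)$ itself, yielding the desired contradiction.

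The main obstacle is making this propagation precise: showing that the local decompositions at different atoms glue into a consistent global decomposition. The uniqueness of direct factorization provides useful rigidity, but the combinatorial details, plausibly carried out by induction on the minimal rank of a factor over all atoms and all decompositions, will require some care.
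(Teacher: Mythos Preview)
Your proposal is not a proof but an outline, and the step you yourself flag as the ``main obstacle'' is in fact the entire content of the argument. The specific claim you make along the way is unjustified: you assert that, using uniqueness of direct decompositions, ``$F_1$ appears as one of the two factors of the decomposition of $[b,\hat 1]$''. But a binary splitting of $[i,\hat 1]$ is \emph{not} unique; only the decomposition into irreducible factors is. If $M^a$ has three or more connected components, there are many incompatible ways to group them into two blocks $(F_1,F_2)$, and there is no reason the chosen $F_1$ should reappear as a factor above a different atom $b$. So before any ``propagation'' can begin you would first have to pass to irreducible factors and then prove a compatibility statement for the irreducible decompositions of $[a,\hat 1]$ and $[b,\hat 1]$ on the common overlap $[a\vee b,\hat 1]$; none of this is carried out.

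The paper's proof is entirely different and avoids the lattice-gluing problem. It inducts on $n=|E|$ and uses Tutte's connectivity theorem: if $M$ is connected and $M^i$ is disconnected, then $M\setminus i$ is connected. Assuming every $M^i$ is disconnected, one applies the induction hypothesis to $M\setminus i$ to get an element $j$ with $(M\setminus i)^j=(M^j)\setminus i$ connected; since $M^j$ is disconnected by assumption, $i$ must be a whole component of $M^j$, hence a coloop in $M^j$ (not a loop, by simplicity). That means no circuit of $M$ contains both $i$ and $j$, contradicting connectedness of $M$. This argument is short, matroid-theoretic, and does not require any analysis of how product decompositions above different atoms interact.
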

\begin{proof}
The proof is by induction on $n$. If $n=r$, $M\cong B_n$ is not connected, thus we can assume $n\geq r+1$. If $n=r+1$, then $M\setminus i$ is a matroid on $r$ elements of rank $r$ (since $M$ is connected it does not contain coloops and the deletion of one element does not decrease the rank). This implies that $M^i$ is connected for every $i\in E$ (\cite{tutte} Theorem 6.5). If now $n>r+1$, and every $M^i$ is not connected, this implies that $M\setminus i$
is connected and simple for every $i\in E$. By induction hypothesis, for every $j\neq i$, $\left(M\setminus i\right)^j = \left(M^j\right)\setminus i$ is connected. However, since $M^j$ is not connected, this means that $M^j$ has two connected components, namely, $i$ and $E\setminus(i\cup j)$, and in particular that $i$ is a coloop in $M^j$ (it cannot be a loop because that would imply that $i$ and $j$ are parallel). This means that $i$ is not contained in any circuit of $M^j$, or, in other words, that no circuit of $M$ contains both $i$ and $j$, thus contradicting the hypothesis on $M$ being connected.  
\end{proof}

Now, if $M$ has even rank $r = 2k$, then
$$
[t^{k-1}]\left(P_M(t) + Q_M(t) \right) = \sum_{\substack{F\in\mathscr{L}(M)\\\rk F \text{ odd}}}[t^{k-1}]P_{M^F}Q_{M_F}.
$$
\begin{prop}
Suppose Conjecture \ref{nondegconj} is true in odd rank. If the coefficient $[t^{k-1}]Q_M(t)$ is zero for every degenerate matroid $M$ of even rank, then the Conjecture holds on even rank matroids.  
\end{prop}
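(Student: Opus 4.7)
The plan is to argue by contradiction, using the displayed identity directly above the proposition as the engine. Suppose $M$ is a connected regular matroid of even rank $r=2k$ that is degenerate; the goal is to exhibit a single odd-rank flat whose contribution to the right-hand side of that identity is strictly positive, contradicting the vanishing of its left-hand side.

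The first step is to collapse the identity. Degeneracy of $M$ gives $[t^{k-1}]P_M(t)=0$, and hypothesis (ii) of the proposition gives $[t^{k-1}]Q_M(t)=0$, so the left-hand side is zero. On the right, a short degree count shows that for a flat $F$ of rank $2a+1$ the only pairing of coefficients reaching degree $k-1$ in the product $P_{M^F}(t)\,Q_{M_F}(t)$ is that of the two top coefficients $[t^{k-a-1}]P_{M^F}(t)$ and $[t^{a}]Q_{M_F}(t)$, because of the strict bounds $\deg P_{M^F}\le k-a-1$ and $\deg Q_{M_F}\le a$. By the non-negativity results of \cite{singhodgetheory}, each such product is non-negative, so every summand is individually non-negative; being a sum of non-negative quantities that equals $0$, each summand in fact vanishes.

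The second step is to locate a rank-$1$ flat for which this forced vanishing is impossible. Applying Lemma \ref{Mi}, pick $i\in E$ so that $\mathscr{L}(M^i)$ is the lattice of flats of a simple connected matroid $N$; since regularity is preserved by contraction and by simplification, $N$ is a connected regular matroid of odd rank $2k-1$. Then hypothesis (i) of the proposition (the conjecture in odd rank) forces $N$ to be non-degenerate, so $[t^{k-1}]P_N(t)>0$, and since $P$ depends only on the lattice of flats, $[t^{k-1}]P_{M^i}(t)>0$. Taking $F=\{i\}$ we have $M_F=B_1$ and $Q_{M_F}(t)=1$, so the $F=\{i\}$ summand of the right-hand side equals $[t^{k-1}]P_{M^i}(t)>0$, the desired contradiction.

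I expect no serious obstacle: the only delicate point, producing an atom whose contraction (after simplification) is still connected, is packaged into Lemma \ref{Mi}; once that is available, non-negativity of all summands reduces the argument to exhibiting a single strictly positive one. The base case $r=2$ is vacuous, since there $\lfloor (r-1)/2 \rfloor = 0$ and no matroid can be degenerate.
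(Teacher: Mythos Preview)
Your proof is correct and follows essentially the same route as the paper's: assume a connected regular degenerate $M$ of rank $2k$, use the displayed identity together with Lemma~\ref{Mi} to produce an atom $i$ with $[t^{k-1}]P_{M^i}>0$, and derive a contradiction with the hypothesis $[t^{k-1}]Q_M=0$. You are in fact more explicit than the paper in two places: you spell out the degree count showing that only the product of the two top coefficients can contribute in degree $k-1$, and you invoke the non-negativity of the coefficients of $P$ and $Q$ from \cite{singhodgetheory} to ensure all summands on the right are non-negative (the paper uses this implicitly when it concludes $[t^{k-1}]Q_M>0$ from a single positive term).
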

\begin{proof}
Suppose, by contradiction, that $M$ is connected, regular and degenerate. Then we know that
$$
Q_M(t) = \sum_{\substack{F\in\mathscr{L}(M)\\\rk F \text{ odd}}}[t^{\left\lfloor \frac{2k-\rk F-1}{2}\right\rfloor}]P_{M^F}(t)[t^{\left\lfloor\frac{\rk F -1}{2} \right\rfloor}]Q_{M_F}
$$
and by Lemma \ref{Mi} there exists $i\in E$ such that $M^i$ is connected and regular; since $\rk M^i$ is odd, we can assume it is non degenerate, that is $[t^{k-1}]P_{M^i}>0$. This means that $[t^{k-1}]Q_M(t) > 0$.
\end{proof}

Finally, we are able to prove that the Conjecture holds for all modular matroids.
We recall that the Fano matroid is a modular matroid of rank 3, thus degenerate. However, since it is not regular, it is not a valid counterexample to the Conjecture.

\begin{teo}\label{modregular}
If $M$ is a connected modular matroid of rank $r\geq 3$, then it is not regular.
\end{teo}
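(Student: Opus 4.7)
The plan is to combine the structural characterization $v)$ of modularity with Tutte's excluded-minor theorem for regular matroids.

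First I would invoke condition $v)$ to write $\mathscr{L}(M) \cong \mathscr{L}(B_n) \times \mathscr{L}(P)$ for some $n \geq 0$ and some projective geometry $P$. Since a product of flat-lattices corresponds to a direct sum of the underlying matroids, and a direct sum is connected only when all but one summand is trivial, the connectedness of $M$ combined with $r \geq 3$ (which rules out $M \cong B_1$) forces $n = 0$ and $M \cong P$. Thus $M$ is itself a projective geometry of rank at least $3$.

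Next I would exhibit a forbidden minor by splitting on the number of atoms in a rank-2 flat of $M$. If some rank-2 flat $\ell$ has $k \geq 4$ atoms, then $M_\ell = U_{2,k}$ contains $U_{2,4}$ as a deletion, giving a $U_{2,4}$ minor of $M$. Otherwise, by the definition of projective geometry, every rank-2 flat has exactly three atoms. Pick any rank-3 flat $F$; the interval $[\emptyset, F]$ is a rank-3 modular geometric lattice (modularity passes to intervals) in which every line has three atoms, and modularity in rank $3$ forces any two distinct lines to meet in a point. A short double count on the resulting point–line incidences then pins down $M_F$ as the unique projective plane of order $2$, namely the Fano matroid $F_7$.

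Finally I would apply Tutte's excluded-minor theorem, which says that a matroid is regular if and only if it has no minor isomorphic to $U_{2,4}$, $F_7$, or $F_7^*$. Either branch of the above dichotomy supplies a forbidden minor, so $M$ is not regular. The main subtlety I anticipate is Step 1: one has to use the precise dictionary between lattice product decompositions and direct-sum decompositions of matroids in order for connectedness to eliminate the Boolean factor. Once we are reduced to the projective-geometry case, the Fano identification in Case B is the other careful point, but it amounts to an elementary parameter count, after which Tutte's theorem closes the argument.
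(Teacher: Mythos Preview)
Your proposal is correct and follows the same overall strategy as the paper: reduce $M$ to a projective geometry via the Birkhoff decomposition, then produce either a $U_{2,4}$ minor (some line has $\geq 4$ points) or an $F_7$ minor (every line has exactly $3$ points), and conclude non-regularity. The executions differ in two places worth noting. To pin down the Fano plane in the three-point-line case, the paper computes the characteristic polynomial of a rank-$3$ projective geometry in two independent ways (once via Stanley's formula for modular lattices, once from the Tutte polynomial) and solves for the number of atoms, whereas your incidence double-count is the classical projective-plane argument and is shorter. For the final step, the paper argues via representability in characteristic $2$ rather than citing Tutte's excluded-minor theorem; these are equivalent, but your formulation is cleaner. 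The paper also first settles rank $3$ and then reduces general rank to rank-$3$ flats, while you treat all ranks uniformly; either organization lands on the same minor.
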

\begin{proof}
Firstly, we point out that a quick check shows that the statement is true for matroids on less than 8 elements (See for example \cite{list}). We also see that the only rank-3 connected modular matroids on 7 elements are the Fano matroid and its dual, which are known to be representable only on fields of characteristic 2 and, therefore, not regular. Consider now a rank-3 modular connected matroid $M$ on $n$ elements; since it is connected, in particular it does not contain any coloop, hence it is a connected projective geometry, where the hyperplanes coincide with the rank-2 flats. If $M$ contains any hyperplane $H$ with $k\geq 4$ elements then $M_H\cong U_{2,k}$, which is a minor of $M$ not representable in fields of characteristic 2. Thus, we just need to check what happens if all hyperplanes of $M$ have exactly 3 elements; by Theorem 7.2.5 in \cite{white} (See also \cite{stan2}), we can compute its characteristic polynomial to be equal to 
\begin{align*}
    \chi_M(t)& = (t-1)(t-2)(t-n+3)\\
    & = t^3-nt^2+(3n-7)t+(6-2n).
\end{align*}
However, its Tutte polynomial must be of the form
\begin{align*}
    T_M(x,y) =& \sum_{A\subseteq E}(x-1)^{3-\rk A}(y-1)^{|A|-\rk A}\\
    =& (x-1)^3+n(x-1)^2+{n\choose 2}(x-1) + n(x-1)(y-1) + \\
    & \left[{n\choose 3} -n \right]+\sum^n_{k=4} {n\choose k}(y-1)^{k-3},
\end{align*}
which means that the characteristic polynomial is equal to
\begin{align*}
    \chi_M(t) =& -T_M(1-t,0)\\
    =&t^3 - nt^2 +\frac{n(n-3)}{2}t - \frac{n^2-5n+2}{2}.
\end{align*}
The two expressions for $\chi_M$ are equal if and only if $n=7$, which means that $M$ is a Fano matroid (or its dual). This lets us conclude that the statement holds on rank-3 matroids. For a generic rank $r$, consider a connected projective geometry $M$ and all its rank-3 flats. If $G$ is a rank-3 flat, $M_G$ must be a modular (not necessarily connected) geometric lattice and thus must be isomorphic to one among these possibilities
\begin{itemize}
    \item $B_3$,
    \item $B_1\oplus U_{2,k}$,
    \item A rank-3 connected projective geometry.
\end{itemize}
However, $M$ is a connected projective geometry and its rank-2 flats need to have at least three elements, which lets us exclude the first two options. Therefore, every restriction is isomorphic to a rank-3 connected projective geometry, which is not regular.
\end{proof}

These results let us then conclude that 
\begin{itemize}
    \item If a matroid is modular, connected and regular it is isomorphic to either $B_1$ or $U_{2,3}$, which are both non-degenerate;
    \item If a connected regular matroid of rank $r\leq 4$ is not modular, then it is non-degenerate.
\end{itemize}

\end{document}